\theoremstyle{definition}
\newtheorem{theorem}{Theorem}[section]
\newtheorem{definition}[theorem]{Definition}
\newtheorem{example}[theorem]{Example}
\newtheorem{c-example}[theorem]{Counter-example}
\newtheorem{Lemma}[theorem]{Lemma}
\newtheorem{corollary}[theorem]{Corollary}
\newtheorem{Prop}[theorem]{Proposition}
\newtheorem{remark}[theorem]{Remark}
\numberwithin{equation}{section}
\newcommand{\Cal}[1]{{\mathcal #1}}
\newcommand{\paral}[1]{\ar@<0.3ex>[#1] \ar@<-0.3ex>[#1]}
\newcommand{\pushoutcorner}[1][dr]{\save*!/#1+1.2pc/#1:(1,-1)@^{|-}\restore}
\newcommand{\pullbackcorner}[1][dr]{\save*!/#1-1.2pc/#1:(-1,1)@^{|-}\restore}
\newcommand{\Mod}[1]{\operatorname{Mod}#1}
\DeclareMathOperator{\Hom}{Hom}
\DeclareMathOperator{\Triv}{\mathsf{Triv}}
\renewcommand{\mod}{\operatorname{mod}}
\title{Building pretorsion theories from torsion theories}
\author[F. Campanini]{Federico Campanini}
\address{Universit\'e catholique de Louvain, Institut de Recherche en Math\'ematique et Physique, 1348 Louvain-la-Neuve, Belgium}
\email{federico.campanini@uclouvain.be}
\author[F. Fedele]{Francesca Fedele}
\address{School of Mathematics, University of Leeds, Leeds, LS2 9JT, United Kingdom}
\email{f.fedele@leeds.ac.uk}
 \thanks{ The first named author is a postdoctoral researcher of the Fonds de la Recherche Scientifique - FNRS. The second named author is supported by the EPSRC grant ``Combinatorial Representation Theory: Algebra and its interfaces with geometry, topology and combinatorics". }
\subjclass[2020]{Primary 18E40, 18E10, 16S90, 18A99, 16G70}
\begin{document}

\begin{abstract}
Torsion theories play an important role in abelian categories and they have been widely studied in the last sixty years. In recent years, with the introduction of pretorsion theories, the definition has been extended to general (non-pointed) categories. Many examples have been investigated in several different contexts, such as topological spaces and topological groups, internal preorders, preordered groups, toposes, V-groups, crossed modules, etc.
In this paper, we show that pretorsion theories naturally appear also in the ``classical" framework, namely in abelian categories.
We propose two ways of obtaining pretorsion theories starting from torsion theories. The first one uses ``comparable'' torsion theories, while the second one extends a torsion theory with a Serre subcategory. We also give a universal way of obtaining a torsion theory from a given pretorsion theory in additive categories. We conclude by providing several applications in module categories, internal groupoids, recollements and representation theory.
\end{abstract}
\maketitle

\section*{Introduction}
Pretorsion theories were defined in \cite{FF, FFG} as ``non-pointed torsion theories", where the zero object and the zero morphisms are replaced by a class of “trivial objects” and an ideal of ``trivial morphisms”, respectively. This notion generalises many concepts of torsion theory introduced and investigated by several authors in pointed and multi-pointed categories \cite{D, BG, CDT, GJ, JT}. Pretorsion theories appear in several different contexts, such as topological spaces and topological groups \cite{FFG}, internal preorders \cite{FF, FFG2, BCG, BCG2}, categories \cite{BCG3, BCGT, Xarez}, preordered groups \cite{GM}, V-groups \cite{Michel}, crossed modules, etc.

In this paper, we present two ways of obtaining pretorsion theories starting from torsion theories, so that many new examples of pretorsion theories can be given in pointed categories.
Lattices and chains of torsion theories are widely studied topics and they are the perfect framework for applying the first result we prove, where two ``comparable'' torsion theories are used to build a pretorsion theory, as follows.

\medskip

\noindent\textbf{Theorem~\ref{main}.}
Let $\Cal C$ be a pointed category and consider two torsion theories $(\Cal T_1,\Cal F_1)$ and $(\Cal T_2, \Cal F_2)$ in it. Then, the following conditions are equivalent:
\begin{enumerate}
    \item
    $\Cal T_2 \subseteq \Cal T_1$;
    \item
    $\Cal F_1\subseteq \Cal F_2$;
    \item
    $(\Cal T_1, \Cal F_2)$ is a pretorsion theory.
\end{enumerate}
Moreover, if these conditions hold, then $\Cal T_1= \Cal T_2 \ast \Cal Z$ and $\Cal F_2=\Cal Z \ast \Cal F_1$, where $\Cal Z:= \Cal T_1 \cap \Cal F_2$.
\medskip

The second method we present to build pretorsion theories consists of ``extending'' a torsion theory with a Serre subcategory, that is, a full subcategory closed under subobjects, quotients and extensions.

\medskip
\noindent\textbf{Theorem~\ref{extension_Serre_2}.}
Let $\Cal C$ be a pointed category in which every morphism admits an (epi, mono)-factorisation. Assume that $\Cal C$ has pullbacks and pushouts which preserve normal epimorphisms and normal monomorphisms respectively. Let $(\Cal U, \Cal V)$ be a torsion theory in $\Cal C$ and let $\Cal S$ be a monocoreflective and epireflective Serre subcategory of $\Cal C$. Then, the pair $(\Cal T, \Cal F)=(\Cal U \ast \Cal S, \Cal S \ast \Cal V)$ is a pretorsion theory with class of trivial objects $\Cal S$.
\medskip

All the assumptions on $\Cal C$ in the above statement hold for abelian categories and more generally for almost abelian categories in the sense of Rump \cite{R} (e.g. the category of topological (Hausdorff) abelian groups). In particular, any torsion theory of an abelian category can be extended by any bilocalising Serre subcategory, as shown in Example~\ref{recollements}.

\medskip
After these results, we show how to obtain a torsion theory from a given pretorsion theory in an additive category. This construction is universal, and it is the analogue of the universal stable category provided in \cite{BCG3} for lextensive categories.

\medskip
\noindent\textbf{Theorem \ref{stable_theorem}.}
    Let $(\Cal T, \Cal F)$ be a pretorsion theory in an additive category $\Cal C$ with class of trivial objects $\Cal Z$. The quotient functor $\Sigma\colon \Cal C \to \Cal C / \Cal Z$ satisfies the following properties:
    \begin{enumerate}
        \item
        $\Sigma$ sends trivial objects and trivial morphisms into the zero object and zero morphisms respectively;
        \item 
        $\Sigma$ is an additive torsion theory functor;
        \item 
        $(\Sigma(\Cal T), \Sigma(\Cal F))$ is a torsion theory in $\Cal C/ \Cal Z$;
    \end{enumerate}
    Moreover, if $G\colon \Cal C \to \Cal D$ is a functor into a pointed category $\Cal D$ satisfying conditions (1), (2) and (3), then there is a unique functor $H \colon \Cal C / \Cal Z \to \Cal D$ making the following diagram commute
    $$
    \xymatrix{
    \Cal C \ar[rr]^\Sigma \ar[dr]_G & & \Cal C / \Cal Z \ar@{..>}[ld]^H\\
    & \Cal D &
    }.
    $$
\medskip

The paper is organised as follows. In Section~\ref{section_preliminaries}, we recall some key background on torsion and pretorsion theories. Sections ~\ref{Pretorsion theories from pairs of torsion theories}, \ref{Pretorsion theories as extensions of a torsion theory with a Serre subcategory} and \ref{stable_category} respectively present and prove the above three theorems. Finally, Section~\ref{Section_examples} presents applications of the results in various examples, using lattices and chains of torsion theories, and recollements of abelian categories.

\section{Torsion theories and pretorsion theories}\label{section_preliminaries}

Throughout the paper, we will widely use the following notation and terminology.
\begin{itemize} 
\item A subcategory $\Cal B$ of a given category $\Cal C$ is \textit{closed under subobjects [resp. quotients]} if for every monomorphism [resp. epimorphism] with codomain [resp. domain] in $\Cal B$, then also the domain [resp. codomain] is in $\Cal B$.
\item We say that a morphism $f$ \textit{admits an (epi,mono)-factorisation} if it can be written as $f=me$ with $m$ monomorphism and $e$ epimorphism. The codomain of $e$ will be called an \textit{image} of $f$.
\item An epimorphism $f$ is \textit{extremal} if whenever $f=mg$ with $m$ a monomorphism, then $m$ is an isomorphism. An epimorphism is said to be \textit{normal} if it is the cokernel of some morphism. Any normal epimorphism is an extremal epimorphism. \textit{Extremal} and \textit{normal monomorphisms} are defined dually.
\end{itemize}

\medskip

The notion of torsion theory for abelian categories was introduced in \cite{D} by Dickson in 1966 and serves as a standard tool in module
theory and in abelian category theory (see for instance \cite{S}). Here we recall the definition.

\begin{definition}
A \textit{torsion theory} in an abelian category $\Cal C$ is a pair $(\Cal T, \Cal F)$ of full subcategories of $\Cal C$ closed under isomorphisms, such that:
\begin{itemize}
\item
$\Hom(T, F)=0$, for every $T\in\Cal T$ and $F\in\Cal F$;
\item
for every object $X$ in $\Cal C$, there is a short exact sequence
$$\xymatrix{0 \ar[r] & T_X \ar[r]^f &  X \ar[r]^g &  F_X \ar[r] & 0}$$ with $T_X\in\Cal T$ and $F_X\in\Cal F$.
\end{itemize}
\end{definition}
It is worth noting that the notion of torsion theory makes sense in any pointed category, and in fact several authors studied torsion theories out of the abelian case (see for example \cite{D, BG, CDT, GJ, JT}).
More recently, in \cite{FF, FFG} pretorsion theories were defined  as ``non-pointed torsion theories", where the zero object and the zero morphisms are replaced by a class of “trivial objects” and an ideal of ``trivial morphisms”, respectively, as follows.

Let $\Cal C$ be an arbitrary category and fix a class $\Cal Z$ of objects of $\Cal C$, that we shall call \emph{the class of trivial objects}. A morphism $f\colon A\to A'$ in $\Cal C$ is \textit{$\Cal Z$-trivial} if it factors through an object of $\Cal Z$. Given any two objects $X$ and $Y$, we denote by $\Triv(X,Y)$ the class of $\Cal Z$-trivial morphisms from $X$ to $Y$ and by $\Triv$ the class of all $\Cal Z$-trivial morphisms in $\Cal C$. Notice that $\Triv$ is an ideal of morphisms in the sense of Ehresmann \cite{Ehr}, that is, for every pair of composable morphisms $f$ and $g$ in $\Cal C$, $fg\in \Triv$ whenever $f$ or $g$ is in $\Triv$. Hence, it is possible to consider the notions of $\mathcal Z$-kernel and $\mathcal Z$-cokernel, defined by replacing, in the definition of kernel and cokernel, the ideal of zero morphisms with the ideal of trivial morphisms induced by the class $\mathcal Z$ as follows.

\begin{definition}
A morphism $\varepsilon\colon X\to A$ in $\Cal C $ is a \emph{$\Cal Z$-kernel} of $f\colon A \to A'$ if $f\varepsilon$ is a $\Cal Z$-trivial morphism and, whenever $\lambda \colon Y\to A$ is a morphism in $\Cal C$ and $f\lambda$ is $\Cal Z$-trivial, there exists a unique morphism $\lambda'\colon Y\to X$ in $\Cal C$ such that $\lambda=\varepsilon\lambda'$.
The notion of \emph{$\Cal Z$-cokernel} is defined dually. A sequence $A\overset{f}{\to}B\overset{g}{\to}C$ is called a \emph{short $\Cal Z$-exact sequence} if $f$ is a $\Cal Z$-kernel of $g$ and $g$ is a $\Cal Z$-cokernel of $f$.
\end{definition}

It can be easily seen that $\Cal Z$-kernels and $\Cal Z$-cokernels, whenever they exist, are unique up to isomorphism and they are monomorphisms and epimorphisms respectively \cite{FFG}.

It is worth mentioning that the notions of kernels, cokernels and short exact sequences with respect to an ideal of morphisms played an important role in the works of Lavendhomme \cite{Lav} and Grandis \cite{Gra1, Gra2}. More recently, this approach has also led to a unification of some results in pointed and non-pointed categorical algebra \cite{GJU}.

\begin{definition}
Let $\Cal T$ and $\Cal F$ be full subcategories of $\Cal C$ closed under isomorphisms.
We say that the pair $(\Cal T,\Cal F)$ is a \emph{pretorsion theory} in $\Cal C$ with class of trivial objects $\Cal Z:=\Cal T\cap \Cal F$, if the following two properties are satisfied:
\begin{itemize}
\item
$\Hom(T, F)=\Triv(T,F)$, for every $T \in \Cal T$ and $F \in \Cal F$;
\item
for every object $X$ of $\Cal C$ there is a short $\Cal Z$-exact sequence
$$\xymatrix{ T_X \ar[r]^f &  X \ar[r]^g &  F_X}$$ with $T_X\in\Cal T$ and $F_X\in\Cal F$.
\end{itemize}
\end{definition}

\begin{remark}
    When $\Cal C$ is pointed and $\Cal T\cap \Cal F=0$, we recover the usual notion of torsion theory.
In particular, the following properties are true also for any ``classical" torsion theory.
\end{remark}

Recall from \cite{FFG} that given a pretorsion theory $(\Cal T, \Cal F)$ in a category $\Cal C$, there are two functors:
\begin{itemize}
\item
a ``torsion functor" $T\colon \Cal C\to \Cal T$ which is the right adjoint of the full embedding $E_\Cal T\colon \Cal T \to \Cal C$ of the torsion subcategory $\Cal T$;
\item
a ``torsion-free functor" $F\colon \Cal C\to \Cal F$ which is the left adjoint of the full embedding $E_\Cal F\colon \Cal F \to \Cal C$ of the torsion-free subcategory $\Cal F$.
\end{itemize}
For every object $X \in \Cal C$ there is a short $\Cal Z$-exact sequence
$$
\xymatrix{TX\ar[r]^-{\varepsilon_X} & X \ar[r]^-{\eta_X} & FX}
$$
where the monomorphism $\varepsilon_X$ is the $X$-component of the counit $\varepsilon$ of the adjunction
$$
\xymatrix{\Cal T \ar@/^5pt/[r]^{E_\Cal T} & \Cal C,\ar@/^5pt/[l]^{T}_\perp}
$$
while the epimorphism $\eta_X$ is the $X$-component of the unit $\eta$ of the adjunction
$$
\xymatrix{\Cal C \ar@/^5pt/[r]^F & \Cal F \ar@/^5pt/[l]^{E_\Cal F}_\perp}
$$

\medskip\

Given a pretorsion theory $(\Cal T, \Cal F)$, the torsion [resp. torsion-free] subcategory is closed under all colimits [resp. limits] existing in $\Cal C$, extremal quotiens [resp. extremal subobjects] and $\Cal Z$-extensions (see \cite[Proposition~4.2]{FFG} and \cite[Lemma~2.1]{BCG3}). Moreover, the following properties hold \cite[Proposition~2.7]{FFG}:
    \begin{itemize} 
    \item
    for any $X \in \Cal C$, if $\Hom(X, \Cal F)=\Triv(X, \Cal F)$, then $X \in \Cal T$;
    \item
    for any $Y \in \Cal C$, if $\Hom(\Cal T, Y)=\Triv(\Cal T, Y)$, then $Y \in \Cal F$.
    \end{itemize}
    
\begin{remark} 
In complete, cocomplete and locally small abelian categories, torsion theories can be equivalently defined by using the previous properties, in the following sense. A pair of subcategories $(\Cal T, \Cal F)$ is a torsion theory if and only if
    \begin{itemize}
        \item
        $\Hom(T,F)=0$ for all $T \in \Cal T, F \in \Cal F$;
        \item
        for any $X \in \Cal C$, if $\Hom(X, F)=0$ for all $F \in \Cal F$, then $X \in \Cal T$;
        \item
        for any $Y \in \Cal C$, if $\Hom(T, Y)=0$ for all $T \in \Cal T$, then $Y \in \Cal F$.
    \end{itemize}
    Moreover, a subcategory $\Cal T$ of $\Cal C$ is a torsion class if and only if $\Cal T$ is closed under quotients, coproducts and extensions \cite[Chapter~VI]{S}. These characterizations fail to be true out of the abelian case for torsion and pretorsion theories \cite{CDT, JT, FFG}.
\end{remark}

\section{Pretorsion theories from pairs of torsion theories}\label{Pretorsion theories from pairs of torsion theories}

Let $\Cal C$ be a pointed category and consider two torsion theories $(\Cal T_1,\Cal F_1)$ and $(\Cal T_2, \Cal F_2)$ in it. For $i=1,2$, let $T_i\colon \Cal C \to \Cal T_i$ and $F_i\colon \Cal C \to \Cal F_i$ denote respectively the torsion and torsion-free functors induced by the torsion theory $(\Cal T_i, \Cal F_i)$. Thus, for every object $X \in \Cal C$ there is a (canonical) short exact sequence
$$
\xymatrix{0\ar[r] & T_i X \ar[r]^-{\varepsilon_{iY}} & X  \ar[r]^{\eta_{iY}} & F_iX \ar[r] & 0}
$$
with $T_i X \in \Cal T_i$ and $F_i X \in \Cal F_i$.

\medskip

Given any two subcategories $\Cal A$ and $\Cal B$ of $\Cal C$, we denote by $\Cal A \ast\Cal B$ the full subcategory of $\Cal C$ whose objects are extensions of an object in $\Cal A$ and an object in $\Cal B$, that is, $X \in \Cal A \ast \Cal B$ if and only if there exists a short exact sequence $0 \to A \to X \to B \to 0$ with $A \in \Cal A$ and $B \in \Cal B$.

\begin{theorem}\label{main}
Let $\Cal C$ be a pointed category and consider two torsion theories $(\Cal T_1,\Cal F_1)$ and $(\Cal T_2, \Cal F_2)$ in it. Then, the following conditions are equivalent:
\begin{enumerate}
    \item
    $\Cal T_2 \subseteq \Cal T_1$;
    \item
    $\Cal F_1\subseteq \Cal F_2$;
    \item
    $(\Cal T_1, \Cal F_2)$ is a pretorsion theory.
\end{enumerate}
Moreover, if these conditions hold, then $\Cal T_1= \Cal T_2 \ast \Cal Z$ and $\Cal F_2=\Cal Z \ast \Cal F_1$, where $\Cal Z:= \Cal T_1 \cap \Cal F_2$.
\end{theorem}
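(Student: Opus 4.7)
My plan is to first establish (1) $\Leftrightarrow$ (2) from the standard Hom-characterisation of torsion and torsion-free classes, then show that (1) and (2) jointly imply (3) by exhibiting an explicit $\Cal Z$-exact sequence, and finally close the loop via (3) $\Rightarrow$ (2); the ``moreover'' clause will then be a short calculation. The characterisation I would record first is that, for any torsion theory $(\Cal T,\Cal F)$ in a pointed category, $\Cal F = \{X : \Hom(\Cal T, X) = 0\}$ and $\Cal T = \{X : \Hom(X, \Cal F) = 0\}$. Indeed, if $\Hom(\Cal T,X)=0$, then the monomorphism $T_X\to X$ in the canonical sequence $0\to T_X\to X\to F_X\to 0$ is the zero morphism, and a zero monomorphism in a pointed category forces $T_X=0$; so $X\cong F_X\in\Cal F$, and the other equality is dual. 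With this in hand, (1) $\Rightarrow$ (2) is immediate: for $F\in\Cal F_1$ we have $\Hom(\Cal T_2,F)\subseteq\Hom(\Cal T_1,F)=0$, hence $F\in\Cal F_2$; (2) $\Rightarrow$ (1) is symmetric.

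Assuming (1) and (2), my candidate pretorsion sequence is
$$T_1 X \xrightarrow{\varepsilon_{1X}} X \xrightarrow{\eta_{2X}} F_2 X.$$
I would first verify $\Hom(\Cal T_1,\Cal F_2)=\Triv(\Cal T_1,\Cal F_2)$: any $f\colon T\to F$ with $T\in\Cal T_1$ and $F\in\Cal F_2$ factors through the $\Cal F_2$-reflection $\eta_{2T}\colon T\to F_2 T$, and $F_2 T$ lies in $\Cal T_1$ (as an extremal quotient of $T$) and in $\Cal F_2$, hence in $\Cal Z$. For the $\Cal Z$-kernel property of $\varepsilon_{1X}$, the composition $\eta_{2X}\varepsilon_{1X}$ is $\Cal Z$-trivial by what has just been proved. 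Given $\lambda\colon Y\to X$ with $\eta_{2X}\lambda$ $\Cal Z$-trivial, the unique arrow $\beta\colon F_2 X\to F_1 X$ satisfying $\beta\eta_{2X}=\eta_{1X}$ (which exists by $\Cal F_1\subseteq\Cal F_2$ and the adjoint property of $\eta_{2X}$), together with $\Hom(\Cal Z,\Cal F_1)=0$ (since $\Cal Z\subseteq\Cal T_1$), forces $\eta_{1X}\lambda=0$; hence $\lambda$ factors uniquely through $\ker(\eta_{1X})=T_1 X$. The $\Cal Z$-cokernel property of $\eta_{2X}$ is established dually.

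For (3) $\Rightarrow$ (2), take $F\in\Cal F_1$ with pretorsion sequence $T_F\to F\to F_F$ where $T_F\in\Cal T_1$ and $F_F\in\Cal F_2$. The classical pairing gives $\Hom(T_F,F)=0$, so $T_F\to F$ is both a monomorphism and the zero morphism, forcing $T_F=0$; then $F\to F_F$ is the $\Cal Z$-cokernel of the zero morphism $0\to F$, whose universal property makes it an isomorphism, whence $F\cong F_F\in\Cal F_2$. The ``moreover'' clause follows by applying the $(\Cal T_2,\Cal F_2)$-sequence to $X\in\Cal T_1$: the quotient $F_2 X$ lies in $\Cal T_1$ (closure under extremal quotients) and in $\Cal F_2$, hence in $\Cal Z$, giving $X\in\Cal T_2\ast\Cal Z$; the reverse inclusion uses closure of $\Cal T_1$ under extensions, and the equality $\Cal F_2=\Cal Z\ast\Cal F_1$ is dual. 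I expect the main obstacle to be this (3) $\Rightarrow$ (2) step, which hinges on the pointed-category subtleties that a monomorphism equal to the zero morphism has zero domain and that a $\Cal Z$-cokernel of the zero morphism is an isomorphism — precisely what makes the pretorsion axioms collapse back to the classical torsion axioms when tested against objects in $\Cal F_1$.
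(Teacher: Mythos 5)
Your proof is correct and follows essentially the same route as the paper: establish (1) $\Leftrightarrow$ (2) via the Hom-orthogonality characterisation, produce the $\Cal Z$-exact sequence $T_1 X \to X \to F_2 X$ to prove (1) $\Rightarrow$ (3), recover a torsion condition from (3) by showing the appropriate half of the $\Cal Z$-exact sequence degenerates, and close the ``moreover'' clause by applying the $(\Cal T_2,\Cal F_2)$-sequence to an object of $\Cal T_1$ and using closure under extremal quotients and extensions. The only deviations are dual choices in two places (you prove (3) $\Rightarrow$ (2) where the paper proves (3) $\Rightarrow$ (1), and you factor a morphism $T\to F$ through $F_2T$ where the paper factors through $T_1F$), which change nothing substantive.
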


\begin{proof} 
The equivalence of the first two conditions is clear (and well known). Indeed, if $\Cal T_2 \subseteq \Cal T_1$ and $Y \in \Cal F_1$, then $\Hom(X,Y)=0$ for all $X \in \Cal T_1$, hence $\Hom(X,Y)=0$ for all $X \in \Cal T_2$ and so $Y \in \Cal F_2$. The other implication follows by a dual argument.

Assume now that $(\Cal T_1, \Cal F_2)$ is a pretorsion theory and let $X \in \Cal T_2$. Then there is a short $\Cal Z$-exact sequence
$$
\xymatrix{T_X \ar@{>->}[r]^\varepsilon & X \ar@{->>}[r]^\eta & F_X}
$$
where $T_X \in \Cal T_1$ and $F_X\in \Cal F_2$. Since $X \in \Cal T_2$, then $\eta$ is zero and so a trivial morphism. Hence $\varepsilon$ is an isomorphism \cite[Lemma 2.4]{FFG}. It follows that $T_X\cong X \in \Cal T_1$.

Conversely, assume that Condition $(1)$ holds. Let $f\colon X \to Y$ be a morphism with $X \in \Cal T_1$ and $Y \in \Cal F_2$.
Consider the diagram
$$
\xymatrix{
0\ar[r] & T_2X \ar[r]^-{\varepsilon_{2X}} & X \ar[r]^{\eta_{2X}} \ar[d]^f \ar@{.>}[dl]& F_2 X \ar[r] & 0\\
0\ar[r] & T_1 Y \ar[r]^-{\varepsilon_{1Y}} & Y  \ar[r]^{\eta_{1Y}} & F_1Y \ar[r] & 0
}
$$
where the first (resp. second) row is the canonical short exact sequence of $X$ (resp. $Y$) with respect to the torsion theory $(\Cal T_2, \Cal F_2)$ (resp. ($\Cal T_1, \Cal F_1$)). The dotted arrow is induced by the fact that $\eta_{1X}\cdot f=0$. Moreover, $T_1Y$ is a normal subobject of $Y \in \Cal F_2$, hence $T_1Y \in \Cal T_1 \cap \Cal F_2$. Thus $f$ is a trivial morphism.

To conclude, it suffices to show that for every $X \in \Cal C$, the sequence
$$
\xymatrix{
T_1X \ar[r]^-{\varepsilon_{1X}} & X \ar[r]^{\eta_{2X}} & F_2 X
}
$$
is a short $\Cal Z$-exact sequence. Let us prove that $\varepsilon_{1X}$ is the $\Cal Z$-kernel of $\eta_{2X}$ (the ``$\Cal Z$-cokernel part" can be proved dually). From what we have seen above, the composite morphism $\eta_{2X}\cdot\varepsilon_{1X}$ is trivial. Let $g\colon W\to X$ be a morphism such that $\eta_{2X}\cdot g$ is trivial. Applying the functor $F_2$ to the morphism $\eta_{1X}\colon X \to F_1 X$ and using the assumption $\Cal F_1\subseteq \Cal F_2$, we get a commutative diagram
$$
\xymatrix{
 & F_1X \ar@{=}[r] & F_1X \\
T_1X \ar[r]^-{\varepsilon_{1X}} & X \ar[r]^{\eta_{2X}} \ar[u]^{\eta_{1X}} & F_2 X \ar[u]_{F_2\eta_{1X}}\\
  & W \ar[u]^g \ar@{.>}[ul] & 
}
$$
The morphism $\eta_{2X}\cdot g$ is trivial, so in particular it factors through an object in $\Cal T_1$. Thus we have $\eta_{1X}\cdot g=F_2\eta_{1X}\cdot \eta_{2X}\cdot g=0$ and therefore $g$ factors uniquely through $\varepsilon_{1X}$.

For the last assertion, since both  $\Cal Z$ and $\Cal T_2$ are contained in $\Cal T_1$ and $\Cal T_1$ is closed under extensions, we have $\Cal T_1 \supseteq \Cal T_2 \ast \Cal Z$. For the other inclusion, let $X \in \Cal T_1$ and consider its canonical short exact sequence with respect to $(\Cal T_2, \Cal F_2)$
$$
\xymatrix{0\ar[r] & T_2 X \ar[r] & X  \ar[r] & F_2 X \ar[r] & 0}.
$$
Since $\Cal T_1$ is closed under extremal quotients, $F_2X \in \Cal T_1 \cap \Cal F_2=\Cal Z$, hence $\Cal T_1= \Cal T_2 \ast \Cal Z$. The dual argument proves the other equality.
\end{proof}

\begin{remark}
    The class of trivial objects $\Cal T_1 \cap \Cal F_2$ is closed under extensions. It is also closed under subobjects [resp. quotients] if the torsion theory $(\Cal T_1, \Cal F_1)$ is hereditary [resp. $(\Cal T_2, \Cal F_2)$ is cohereditary].
\end{remark}

Lattices and chains of torsion theories are widely studied topics and they are the perfect framework for applying Theorem~\ref{main} in order to get pretorsion theories. We present some applications of our result in Section~\ref{Section_examples}.

\section{Pretorsion theories as extensions of a torsion theory with a Serre subcategory}\label{Pretorsion theories as extensions of a torsion theory with a Serre subcategory}

Let $\Cal C$ be a pointed category. By a Serre subcategory $\Cal S$ of $\Cal C$ we mean a full subcategory of $\Cal C$ closed under subobjects, quotients and extensions.

As in  the previous section, given any two subcategories $\Cal A$ and $\Cal B$ of $\Cal C$, we denote by $\Cal A \ast\Cal B$ the full subcategory of $\Cal C$ whose objects are extensions of an object in $\Cal A$ and an object in $\Cal B$.

\begin{Prop}\label{extension_Serre_1}
Let $(\Cal U, \Cal V)$ be a torsion theory in a pointed category $\Cal C$ in which every morphism admits an (epi, mono)-factorisation, and let $\Cal S$ be a Serre subcategory. Consider the pair $(\Cal T, \Cal F)=(\Cal U \ast \Cal S, \Cal S \ast \Cal V)$. Then:
\begin{enumerate}
\item
$\Cal S = \Cal T \cap \Cal F$;
\item
$\Cal F$ is closed under subobjects (and dually $\Cal T$ is closed under quotients);
\item
$\Hom(\Cal T, \Cal F)=\Triv(\Cal T, \Cal F)$;
\item
For any $X \in \Cal C$, if $\Hom(X, \Cal F)=\Triv(X, \Cal F)$, then $X \in \Cal T$;
\item
For any $Y \in \Cal C$, if $\Hom(\Cal T, Y)=\Triv(\Cal T, Y)$, then $Y \in \Cal F$.
\end{enumerate}
\end{Prop}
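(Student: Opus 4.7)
The plan is to prove the five assertions in order, using throughout two ingredients: the canonical $(\Cal U, \Cal V)$-short exact sequence attached to any object of $\Cal C$, and the Serre closure of $\Cal S$ under subobjects, quotients and extensions. The (epi, mono)-factorisation hypothesis will enter only in part (3), and no further structure on $\Cal C$ will be needed.

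For (1), the inclusion $\Cal S \subseteq \Cal T \cap \Cal F$ is immediate from the trivial short exact sequences $0 \to 0 \to S \to S \to 0$ and $0 \to S \to S \to 0 \to 0$. For the reverse inclusion, take $X \in \Cal T \cap \Cal F$ with witnessing sequences $0 \to U \to X \to S_1 \to 0$ and $0 \to S_2 \to X \to V \to 0$. Since $(\Cal U, \Cal V)$ is a torsion theory, the composite $U \to X \to V$ vanishes, so $U \to X$ factors through the kernel $S_2 \hookrightarrow X$; this factorisation $U \to S_2$ is a monomorphism, giving $U \in \Cal S$ by Serre closure under subobjects, and then $X \in \Cal S$ by closure under extensions.

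For (2), let $X' \hookrightarrow X$ be a monomorphism with $X \in \Cal F$, and form the canonical $(\Cal U, \Cal V)$-decomposition $0 \to U' \to X' \to V' \to 0$ (so $V' \in \Cal V$ automatically). Combining with the given decomposition $0 \to S \to X \to V \to 0$ of $X$, the composite $U' \to X' \hookrightarrow X \to V$ vanishes, hence $U'$ factors as a monomorphism into $S \in \Cal S$, so $U' \in \Cal S$ by Serre closure under subobjects. This places $X'$ in $\Cal S \ast \Cal V = \Cal F$. The closure of $\Cal T$ under quotients is dual.

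Finally, for (3), a morphism $f \colon T \to F$ admits an (epi, mono)-factorisation $T \twoheadrightarrow I \hookrightarrow F$; by (2) applied directly and dually, $I$ is both a subobject of $F \in \Cal F$ and a quotient of $T \in \Cal T$, so $I \in \Cal T \cap \Cal F = \Cal S$ by (1), whence $f$ is $\Cal Z$-trivial. For (4), the canonical $(\Cal U, \Cal V)$-sequence $0 \to U \to X \to V \to 0$ has $V \in \Cal V \subseteq \Cal F$, so by hypothesis the epimorphism $X \to V$ factors through some $S \in \Cal S$; then $S \to V$ is itself an epimorphism, forcing $V \in \Cal S$ by Serre closure under quotients and hence $X \in \Cal U \ast \Cal S = \Cal T$. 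Part (5) follows by the dual argument. The main obstacle is recognising that the canonical $(\Cal U, \Cal V)$-sequence is the right starting point in every part; once this organisational choice is made, the Serre closure properties of $\Cal S$ together with the torsion-theoretic vanishing $\Hom(\Cal U, \Cal V) = 0$ do all the remaining work.
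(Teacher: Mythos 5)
Your proof is correct and follows essentially the same route as the paper's: parts (2)--(5) are the same arguments, and part (1) differs only in that you compare the two witnessing sequences for $\Cal T$- and $\Cal F$-membership directly (factoring $U \to X$ through the kernel $S_2$), whereas the paper inserts the canonical $(\Cal U,\Cal V)$-decomposition of $X$ as a middle row and deduces that both $U_X$ and $V_X$ lie in $\Cal S$ — an equivalent calculation.
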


\begin{proof}
$(1)$ The inclusion $\Cal S \subseteq \Cal T \cap \Cal F$ is clear. If $X \in \Cal T \cap \Cal F$, then we can consider a commutative diagram with exact rows
$$
\xymatrix{
0 \ar[r] & U'  \ar[r] \ar@{..>}[d] & X \ar[r] \ar@{=}[d] & S' \ar[r] \ar@{..>>}[d]^p & 0\\
0 \ar[r] & U_X \ar[r] \ar@{>..>}[d]^q & X \ar[r] \ar@{=}[d] & V_X  \ar[r] \ar@{..>}[d] & 0\\
0 \ar[r] & S'' \ar[r]        & X \ar[r]            & V'' \ar[r]        & 0
}
$$
where $S', S'' \in \Cal S$, $U', U_X \in \Cal U$ and $V'', V_X \in \Cal V$. The induced dotted arrows $p$  and $q$ are an epimorphism and a monomorphism respectively. Since $\Cal S$ is a Serre subcategory, we can conclude that $X \in \Cal S$.

$(2)$
Let $N \rightarrowtail X$ be a monomorphism with $X \in \Cal F$. Then, we have a commutative diagram with exact rows
$$
\xymatrix{
0 \ar[r] & U_N  \ar[r] \ar@{>..>}[d] & N \ar[r] \ar@{>->}[d] & V_N \ar[r] \ar@{..>}[d] & 0\\
0 \ar[r] & S   \ar[r] & X \ar[r] & V  \ar[r] & 0
}
$$
with $S \in \Cal S$, $U_N \in \Cal U$ and $V, V_N \in \Cal V$. The left dotted arrow is a monomorphism, hence $U_N \in \Cal S$ and $N \in \Cal S \ast \Cal V= \Cal F$. Dually, one can prove that $\mathcal{T}$ is closed under quotients.

$(3)$ Let $f\colon T \to F$ be a morphism from an object in $\Cal T$ to an object in $\Cal F$. By the previous points, the image of $f$ is in $\Cal T \cap \Cal F= \Cal S$.

$(4)$ Let $X \in \Cal C$ be such that $\Hom(X, \Cal F)=\Triv(X, \Cal F)$ and consider the short exact sequence of $X$ associated with the torsion theory $(\Cal U, \Cal V)$:
$$
\xymatrix{0 \ar[r] & U_X \ar[r] & X  \ar[r]^\pi & V_X \ar[r] & 0}
$$
Since $\Cal V \subseteq \Cal F$, $\pi$ is a trivial morphism, hence it can be written as $\pi= \beta \alpha$ where the domain of $\beta$ is in $\Cal S$ and $\beta$ is an epimorphism (because so is $\pi$). Then, $V_X \in \Cal S$ and $X \in \Cal U \ast \Cal S= \Cal T$.

$(5)$ Dual of $(4)$.
\end{proof}

The argument used to prove Proposition~\ref{extension_Serre_1}~(1) shows also that if $X \in \Cal F=\Cal S\ast \Cal V$, then the torsion part $U_X$ of $X$ (w.r.t the torsion theory $(\Cal U, \Cal V)$) is in $\Cal S$. The dual statement holds as well.

\medskip

Notice that the pair $(\Cal U \ast \Cal S, \Cal S \ast \Cal V)$ is not a pretorsion theory in general, as the existence of $\Cal S$-short exact sequences for every object of $\Cal C$ is not guaranteed by the hypothesis (see Example \ref{injective-reduced}).

\begin{theorem}\label{extension_Serre_2}
Let $\Cal C$ be a pointed category in which every morphism admits an (epi, mono)-factorisation. Assume that $\Cal C$ has pullbacks and pushouts which preserve normal epimorphisms and normal monomorphisms respectively. Let $(\Cal U, \Cal V)$ be a torsion theory in $\Cal C$ and let $\Cal S$ be a monocoreflective and epireflective Serre subcategory of $\Cal C$. Then, the pair $(\Cal T, \Cal F)=(\Cal U \ast \Cal S, \Cal S \ast \Cal V)$ is a pretorsion theory with class of trivial objects $\Cal S$.
\end{theorem}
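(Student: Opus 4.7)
The plan is to invoke Proposition~\ref{extension_Serre_1}, which already establishes $\Cal S = \Cal T \cap \Cal F$, that $\Hom(\Cal T, \Cal F)=\Triv(\Cal T, \Cal F)$, and the two converse detection properties. What remains is to produce, for every $X \in \Cal C$, a short $\Cal S$-exact sequence $T_X \to X \to F_X$ with $T_X \in \Cal T$ and $F_X \in \Cal F$. I will construct $T_X$ and $F_X$ by combining the canonical torsion sequence $0 \to U_X \to X \xrightarrow{\eta_X} V_X \to 0$ of $(\Cal U, \Cal V)$ with the monocoreflection $\sigma_{V_X}\colon S_{V_X} \hookrightarrow V_X$ and the epireflection $\rho_{U_X}\colon U_X \twoheadrightarrow R_{U_X}$ afforded by $\Cal S$.

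First I would define $T_X$ as the pullback of $\sigma_{V_X}$ along $\eta_X$, with projections $\iota_X\colon T_X \to X$ (automatically a monomorphism, being the pullback of a mono) and $p_X\colon T_X \to S_{V_X}$. Since $\eta_X$ is the cokernel of $U_X \hookrightarrow X$, it is a normal epimorphism; by the hypothesis that pullbacks preserve normal epimorphisms, $p_X$ is a normal epimorphism. A routine pullback computation identifies the kernel of $p_X$ with a canonical inclusion $U_X \hookrightarrow T_X$ (which exists because $U_X \to X \to V_X$ is zero, hence factors through the pullback), yielding a short exact sequence $0 \to U_X \to T_X \to S_{V_X} \to 0$ and thus $T_X \in \Cal U \ast \Cal S = \Cal T$. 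Dually, define $F_X$ as the pushout of $\rho_{U_X}$ along $U_X \hookrightarrow X$, with coprojections $\pi_X\colon X \to F_X$ and $R_{U_X} \to F_X$; the hypothesis on pushouts preserving normal monomorphisms guarantees the latter is a normal mono with cokernel $V_X$, so $F_X \in \Cal S \ast \Cal V = \Cal F$. The pushout also induces a canonical map $q_X\colon F_X \to V_X$ with $q_X\pi_X = \eta_X$, obtained by pairing $\eta_X$ with the zero map $R_{U_X} \to V_X$.

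To conclude, I would verify that $T_X \xrightarrow{\iota_X} X \xrightarrow{\pi_X} F_X$ is short $\Cal S$-exact. The composite $\pi_X \iota_X$ is $\Cal S$-trivial by Proposition~\ref{extension_Serre_1}(3), since $T_X \in \Cal T$ and $F_X \in \Cal F$. For the $\Cal S$-kernel property of $\iota_X$: given $\lambda\colon Y \to X$ with $\pi_X\lambda = \mu\nu$ for some $\nu\colon Y \to S$, $\mu\colon S \to F_X$, and $S \in \Cal S$, composing with $q_X$ yields $\eta_X \lambda = q_X \mu \nu$; the morphism $q_X\mu\colon S \to V_X$ factors uniquely through $\sigma_{V_X}$ by the monocoreflective universal property, so $\eta_X\lambda$ factors through $\sigma_{V_X}$, and the pullback property delivers the required factorisation $\lambda'\colon Y \to T_X$, unique since $\iota_X$ is a mono. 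The $\Cal S$-cokernel property of $\pi_X$ follows by the dual argument using epireflectivity. The main technical obstacle is organising this diagram chase so the non-abelian setting does not get in the way: the hypotheses on pullback/pushout preservation of normal epimorphisms and normal monomorphisms are precisely what ensures the short exact sequences $0 \to U_X \to T_X \to S_{V_X} \to 0$ and $0 \to R_{U_X} \to F_X \to V_X \to 0$ are well-formed and that the kernel-cokernel bookkeeping behaves as in the abelian case.
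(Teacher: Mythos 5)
Your proposal is correct and follows essentially the same route as the paper: you build $T_X$ as the pullback of the monocoreflection $S_{V_X}\hookrightarrow V_X$ along $\eta_X$, build $F_X$ as the pushout of the epireflection $U_X\twoheadrightarrow R_{U_X}$ along $U_X\hookrightarrow X$, invoke the preservation hypotheses to recognise $T_X\in\Cal U\ast\Cal S$ and $F_X\in\Cal S\ast\Cal V$, and verify $\Cal S$-exactness via the comparison map $F_X\to V_X$ and the universal properties. This matches the paper's construction (its diagram~(\ref{diagram})) step for step.
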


\begin{proof}
By Proposition~\ref{extension_Serre_1}, we only need to show that for every $X \in \Cal C$ there exists an $\Cal S$-short exact sequence $T_X \to X \to F_X$ with $T_X \in \Cal T$ and $F_X \in \Cal F$.

Notice that $\Cal S$ is a monocoreflective and epireflective subcategory of $\Cal C$ if and only if all the $\Cal S$-kernels and $\Cal S$-cokernels of the identity morphisms exist in $\Cal C$ \cite[Section~1.5]{JT}.

Let $X \in \Cal C$ and consider the short exact sequence of $X$ associated with the torsion theory $(\Cal U, \Cal V)$:
$$
\xymatrix{0 \ar[r] & U_X \ar[r] & X  \ar[r] & V_X \ar[r] & 0}
$$
Consider then the commutative diagram
$$
\xymatrix{
0 \ar[r] & U_X \pullbackcorner \ar[r] \ar@{=}[d] & T_X \pullbackcorner \ar[r] \ar[d] & S_X \ar[r] \ar[d] & 0\\
0 \ar[r] & U_X \ar[r] & X  \ar[r] & V_X \ar[r] & 0\\
 & & & V_X \ar@{=}[u] & 
}
$$
obtained as follows:
\begin{itemize}
\item
$S_X\to V_X$ is the $\Cal S$-kernel of the identity of $V_X$;
\item
the right-hand square is a pullback and, in particular, $T_X\to X$ is the $\Cal S$-kernel of $X \to V_X$;
\item
the pullback of the arrows $T_X\rightarrow X \leftarrow U_X$ gives the left-hand square and $U_X \to T_X$ is the kernel of $T_X\to S_X$;
\item
since pullbacks preserve normal epimorphisms, the top row is a short exact sequence and hence $T_X \in \Cal T$.
\end{itemize}
Now, consider the $\Cal S$-cokernel of the identity of $U_X$ and complete the diagram with two pushout squares:
\begin{equation}\label{diagram}
\xymatrix{
0 \ar[r] & U_X \ar[r] \ar@{=}[d] & T_X \ar[r] \ar[d]^{\varepsilon_X}  & S_X \ar[r] \ar[d] & 0\\
0 \ar[r] & U_X \ar[d]\ar[r] & X  \ar[d]^{\eta_X} \ar[r] & V_X \ar[r] & 0\\
0 \ar[r]& S'_X \ar[r] & F_X \pushoutcorner \ar[r] & V_X \pushoutcorner \ar@{=}[u]\ar[r] & 0
}
\end{equation}
We want to show that the middle column is an $\Cal S$-short exact sequence. Since $T_X \in\Cal T$ and $F_X \in \Cal F$, the composite morphism $\eta_X \cdot \varepsilon_X$ is $\Cal S$-trivial by Proposition~\ref{extension_Serre_1}~(3). From the fact that $\varepsilon_X$ is the $\Cal S$-kernel of $X\to V_X$ [resp. $\eta_X$ is the $\Cal S$-cokernel of $U_X \to X$] we get that $\varepsilon_X$ is the $\Cal S$-kernel of $\eta_X$ [resp. $\eta_X$ is the $\Cal S$-cokernel of $\varepsilon_X$].
\end{proof}

\begin{remark}
Examples of categories satisfying the hypothesis of Theorem~\ref{extension_Serre_2} include abelian categories and more generally almost abelian categories in the sense of Rump \cite{R} (e.g. the category of topological (Hausdorff) abelian groups). Notice that the hypothesis on $\Cal C$ about the behaviour of pullbacks can be relaxed: it can be assumed that only pullbacks of a cokernel and an $\Cal S$-kernel exist and preserve cokernels. Dually for pushouts. In particular, we have the following result, as a specialisation of Theorem~\ref{extension_Serre_2}.
\end{remark}

\begin{corollary}\label{extension_Serre_2_corollary}
    Let $\Cal C$ be an abelian category and let $\Cal S$ be a monocoreflective and epireflective subcategory of $\Cal C$. If $(\Cal U, \Cal V)$ is a torsion theory in $\Cal C$, then $(\Cal U \ast \Cal S, \Cal S \ast \Cal V)$ is a pretorsion theory with class of trivial objects $\Cal S$.
\end{corollary}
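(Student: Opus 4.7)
The plan is to apply Theorem~\ref{extension_Serre_2} directly: the corollary follows as soon as one verifies that abelian categories satisfy all of its ambient hypotheses, and that a monocoreflective and epireflective subcategory of an abelian category is automatically a Serre subcategory.

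For the ambient conditions on $\Cal C$, in any abelian category every morphism admits its canonical image factorisation as an epi followed by a mono, all finite limits and colimits exist, and it is a classical fact that pullbacks of (normal) epimorphisms are (normal) epimorphisms while pushouts of (normal) monomorphisms are (normal) monomorphisms. These are standard abelian-category facts, so the hypotheses on $\Cal C$ in the statement of Theorem~\ref{extension_Serre_2} are automatic and require no further checking.

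For the Serre property of $\Cal S$, I would first note that epireflectivity in an abelian category forces closure under subobjects: if $N \hookrightarrow X$ with $X \in \Cal S$, the composite $N \twoheadrightarrow R_N \to X$ coincides with the original mono, so the reflection $N \twoheadrightarrow R_N$ is simultaneously a monomorphism, hence an isomorphism, and $N \cong R_N \in \Cal S$. Dually, monocoreflectivity forces closure under quotients. For closure under extensions, consider a short exact sequence $0 \to A \to B \to C \to 0$ with $A, C \in \Cal S$: the inclusion $A \hookrightarrow B$ factors through the monocoreflection $S_B \hookrightarrow B$, so $A \subseteq S_B$ in $B$; dually, the projection $B \twoheadrightarrow C$ factors through the epireflection $B \twoheadrightarrow R_B$, so the kernel $K$ of that reflection is contained in $A$. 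A diagram chase comparing the induced maps $S_B \to R_B$ and $R_B \twoheadrightarrow B/S_B$ (both of which live in $\Cal S$ by closure under subobjects and quotients) then identifies $B$ with its coreflection $S_B$ (equivalently with its reflection $R_B$), placing $B$ in $\Cal S$.

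Once both the ambient conditions on $\Cal C$ and the Serre property of $\Cal S$ are verified, Theorem~\ref{extension_Serre_2} applies verbatim and yields that $(\Cal U \ast \Cal S,\ \Cal S \ast \Cal V)$ is a pretorsion theory in $\Cal C$ with class of trivial objects $\Cal S$. The main obstacle I foresee is the extension-closure step: the other Serre axioms reduce at once to the universal properties of the reflection and coreflection, and the ambient hypotheses on $\Cal C$ are classical, but making the comparison between $S_B$, $B$ and $R_B$ genuinely force $B \in \Cal S$ is the only subtle piece of the reduction.
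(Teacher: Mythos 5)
Your verification that abelian categories satisfy the ambient hypotheses of Theorem~\ref{extension_Serre_2} is correct, and so are your two reductions showing that epireflectivity forces closure under subobjects and, dually, that monocoreflectivity forces closure under quotients. The extension-closure step, however, is not just the ``only subtle piece'': it is false, and no diagram chase can rescue it. In the category of abelian groups, let $\Cal S$ be the class of $\mathbb{F}_p$-vector spaces for a prime $p$. The coreflection of $X$ is $X[p]=\{x\in X : px=0\}$ with monic counit, the reflection is $X/pX$ with epic unit, so $\Cal S$ is monocoreflective and epireflective; yet $0 \to \mathbb{Z}/p \to \mathbb{Z}/p^2 \to \mathbb{Z}/p \to 0$ has both end-terms in $\Cal S$ while $\mathbb{Z}/p^2 \notin \Cal S$. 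Concretely, for $B=\mathbb{Z}/p^2$ one has $S_B = B[p]\cong\mathbb{Z}/p$ and $R_B = B/pB\cong\mathbb{Z}/p$, and the comparison map $S_B \to R_B$ you would chase against is the zero map; $B$ is not identified with $S_B$.

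This means the route you chose, namely deducing the Serre property from the two reflectivity assumptions and then citing Theorem~\ref{extension_Serre_2}, cannot succeed, and in fact the corollary is false as literally stated. In $\mod(kA_2)$ take $\Cal S = \text{add}\,\{\begin{smallmatrix}1\end{smallmatrix}, \begin{smallmatrix}2\end{smallmatrix}\}$, the semisimple modules: the socle and top functors exhibit $\Cal S$ as monocoreflective and epireflective, but it is not Serre since $\begin{smallmatrix}1\\2\end{smallmatrix}$ is an extension of $\begin{smallmatrix}1\end{smallmatrix}$ by $\begin{smallmatrix}2\end{smallmatrix}$ lying outside $\Cal S$. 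With the torsion theory $(\Cal U, \Cal V) = (\text{add}\,\{\begin{smallmatrix}2\end{smallmatrix}\}, \text{add}\,\{\begin{smallmatrix}1\end{smallmatrix}\})$ one computes $\Cal U\ast\Cal S = \Cal S\ast\Cal V = \mod(kA_2)$, so $(\Cal U\ast\Cal S)\cap(\Cal S\ast\Cal V)$ is the whole category, not $\Cal S$; since the class of trivial objects of a pretorsion theory is by definition $\Cal T\cap\Cal F$, the asserted conclusion cannot hold. The corollary is evidently meant to carry over the Serre hypothesis from Theorem~\ref{extension_Serre_2}; once $\Cal S$ is assumed Serre, the statement is immediate from your (correct) verification of the ambient conditions, and there is nothing further to prove.
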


\section{The stable category of pretorsion theories in additive categories}\label{stable_category}

In this section, we show that for a given pretorsion theory in an additive category $\Cal C$ with class of trivial objects $\Cal Z$, one can construct a quotient category $\Cal C/\Cal Z$ and a quotient functor $\Cal C\to \Cal C/\Cal Z$ that, roughly speaking, sends the pretorsion theory in $\mathcal{C}$ to a classical torsion theory in $\Cal C/\Cal Z$ in a universal way.
This is a result analogous to \cite[Theorem~5.2]{BCG3}, where the construction is provided in the context of lextensive categories (see \cite{BCG3} for the definition of lextensive category and details on the construction).

We start by recalling the definition of torsion theory functor introduced in \cite{BCG2}.

\begin{definition}\label{tt-functor}
 Let $(\mathcal T,  \mathcal F)$ be a pretorsion theory in a category $\Cal A$. If $\Cal B$ is a pointed category with a given torsion theory $({\mathcal T'},  {\mathcal F'})$ in it, we say that a functor $G \colon \Cal A \rightarrow \Cal B$ is a \emph{torsion theory functor} if the following two properties are satisfied:
 \begin{enumerate}
 \item $G (\mathcal T) \subseteq \mathcal T'$ and  $G( \mathcal F) \subseteq \mathcal F'$;
 \item if $TA \rightarrow A \rightarrow FA$ is the canonical short $\mathcal Z$-exact sequence associated with $A$ in the pretorsion theory $({\mathcal T},  {\mathcal F})$, then $$0 \rightarrow GTA \rightarrow GA \rightarrow GFA \rightarrow 0$$ is a short exact sequence in $\Cal B$.
 \end{enumerate}
 \end{definition}

Let $\Cal C$ be an additive category and $\Cal I$ an ideal of morphisms (that is, a class of morphisms such that $fg \in \Cal I$ whenever $f$ or $g$ is in $\Cal I)$. We say that $\Cal I$ is an {\em additive ideal} if $\Cal I(X,Y)$ is a subgroup of $\hom_{\Cal C}(X,Y)$ for every pair of objects $X,Y \in \Cal C$. For an additive ideal $\Cal I$ it is possible to construct a quotient additive category $\Cal C / \Cal I$, whose objects are the same as those of $\Cal C$ and $\hom_{\Cal C / \Cal I}(X,Y):= \hom_{\Cal C}(X,Y)/ \Cal I(X,Y)$. The canonical quotient functor $\Cal C \to \Cal C / \Cal I$ is an additive functor sending all the morphisms in $\Cal I$ into zero morphisms \cite[Section~4.9]{Facchini} and it is universal among all functors with this property.

\begin{Lemma}
    Let $\Cal C$ be an additive category and let $(\Cal T, \Cal F)$ be a pretorsion theory with class of trivial objects $\Cal Z$. The ideal $\Triv$ of $\Cal Z$-trivial morphisms is an additive ideal.
\end{Lemma}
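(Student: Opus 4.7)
The plan is to verify that $\Triv(X,Y)$ is a subgroup of $\hom_{\Cal C}(X,Y)$ for every pair of objects $X, Y$: it must contain the zero morphism and be closed under addition and negation. The conceptual crux is that the class $\Cal Z$ of trivial objects contains the zero object and is closed under finite biproducts; once this is established, the additivity of $\Triv$ is forced by the biproduct structure of $\Cal C$.

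First I would observe that $0 \in \Cal Z$. Indeed, by the closure properties recalled in Section~\ref{section_preliminaries}, $\Cal T$ is closed under all colimits existing in $\Cal C$, and in particular contains the empty colimit, which is the initial and hence zero object of $\Cal C$; dually $0 \in \Cal F$, so $0 \in \Cal T \cap \Cal F = \Cal Z$. Consequently every zero morphism factors through $0 \in \Cal Z$ and thus lies in $\Triv$.

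Next I would show that $\Cal Z$ is closed under finite biproducts. Since $\Cal T$ is closed under colimits of $\Cal C$ it contains finite coproducts of its objects, and dually $\Cal F$ contains finite products; in the additive category $\Cal C$ finite coproducts and products coincide with biproducts, so $Z_1 \oplus Z_2 \in \Cal T \cap \Cal F = \Cal Z$ whenever $Z_1, Z_2 \in \Cal Z$.

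The main step is then closure of $\Triv(X,Y)$ under addition. Given $f, g \in \Triv(X,Y)$, pick factorisations $f = \beta_1 \alpha_1$ and $g = \beta_2 \alpha_2$ with $\alpha_i \colon X \to Z_i$, $\beta_i \colon Z_i \to Y$ and $Z_1, Z_2 \in \Cal Z$. Let $\alpha \colon X \to Z_1 \oplus Z_2$ be the morphism with components $\alpha_1, \alpha_2$ and $\beta \colon Z_1 \oplus Z_2 \to Y$ the morphism with components $\beta_1, \beta_2$; the usual biproduct identities give $\beta \alpha = \beta_1 \alpha_1 + \beta_2 \alpha_2 = f + g$, and since $Z_1 \oplus Z_2 \in \Cal Z$ this exhibits $f+g$ as $\Cal Z$-trivial. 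Closure under negation is immediate from $-f = (-\beta_1) \alpha_1$, which still factors through $Z_1 \in \Cal Z$. The only potential obstacle is verifying the two closure facts about $\Cal Z$; everything else is a routine biproduct manipulation.
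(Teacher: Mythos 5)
Your proof is correct and follows essentially the same route as the paper's: the key observation is that $\Cal Z$ is closed under finite biproducts (since $\Cal T$ is closed under colimits and $\Cal F$ under limits, which coincide as biproducts in an additive category), so a sum $f_1 + f_2$ of morphisms factoring through $Z_1, Z_2 \in \Cal Z$ factors through $Z_1 \oplus Z_2 \in \Cal Z$. You simply spell out the routine checks (that $0 \in \Cal Z$ and closure under negation) that the paper leaves implicit.
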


\begin{proof}
    Since $\Cal T$ is closed under products and $\Cal F$ is closed under coproducts, it follows that $\Cal Z=\Cal T \cap \Cal F$ is closed under biproducts (direct-sums). If $f_i\colon X \to Y$ is a morphism factoring through $Z_i \in \Cal Z$ for $i=1,2$, then $f_1+f_2$ factors through $Z_1 \oplus Z_2 \in \Cal Z$.
\end{proof}

When $\Cal I=\Triv$ is the ideal of $\Cal Z$-trivial morphisms of a pretorsion theory, we write $\Cal C/\Cal Z$ in place of $\Cal C/\Triv$ for the quotient category. 

\begin{theorem}\label{stable_theorem}
    Let $(\Cal T, \Cal F)$ be a pretorsion theory in an additive category $\Cal C$ with class of trivial objects $\Cal Z$. The quotient functor $\Sigma\colon \Cal C \to \Cal C / \Cal Z$ satisfies the following properties:
    \begin{enumerate}
        \item
        $\Sigma$ sends trivial objects and trivial morphisms into the zero object and zero morphisms respectively;
        \item 
        $\Sigma$ is an additive torsion theory functor;
        \item 
        $(\Sigma(\Cal T), \Sigma(\Cal F))$ is a torsion theory in $\Cal C/ \Cal Z$;
    \end{enumerate}
    Moreover, if $G\colon \Cal C \to \Cal D$ is a functor into a pointed category $\Cal D$ satisfying conditions (1), (2) and (3), then there is a unique functor $H \colon \Cal C / \Cal Z \to \Cal D$ making the following diagram commute
    $$
    \xymatrix{
    \Cal C \ar[rr]^\Sigma \ar[dr]_G & & \Cal C / \Cal Z \ar@{..>}[ld]^H\\
    & \Cal D &
    }.
    $$
\end{theorem}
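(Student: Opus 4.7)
The plan is to verify (1), (3), (2) in that order --- since the torsion-theory-functor condition in (2) presupposes that $(\Sigma(\Cal T), \Sigma(\Cal F))$ is a torsion theory in $\Cal C/\Cal Z$ --- and then to deduce the universal property from the standard universal property of the quotient by an additive ideal. Note that $\Cal C/\Cal Z$ is additive by the preceding lemma, hence pointed, with zero object inherited from $\Cal C$.

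For (1), $\Sigma$ kills every trivial morphism by construction. If $Z\in\Cal Z$ then $\mathrm{id}_Z$ factors through $Z\in\Cal Z$, so $\Sigma(\mathrm{id}_Z)=0$ and $\Sigma(Z)$ is a zero object in $\Cal C/\Cal Z$.

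The heart of the argument is (3). For every $T\in\Cal T$ and $F\in\Cal F$, the first axiom of a pretorsion theory gives $\Hom_{\Cal C/\Cal Z}(\Sigma T,\Sigma F)=\Hom_{\Cal C}(T,F)/\Triv(T,F)=0$. For every $X\in\Cal C$, I apply $\Sigma$ to the canonical $\Cal Z$-exact sequence $TX\stackrel{\varepsilon_X}{\to}X\stackrel{\eta_X}{\to}FX$ and claim the resulting sequence is short exact in $\Cal C/\Cal Z$. That $\Sigma\eta_X\cdot\Sigma\varepsilon_X=0$ is clear since $\eta_X\varepsilon_X$ is $\Cal Z$-trivial. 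The existence half of the kernel property comes directly from the $\Cal Z$-kernel property of $\varepsilon_X$: any $g\colon W\to X$ with $\eta_X g$ trivial lifts as $g=\varepsilon_X g'$ in $\Cal C$, and this lift passes to the quotient. For the monomorphism half, suppose $h\colon W\to TX$ satisfies $\varepsilon_X h=ab$ for some $b\colon W\to Z$ and $a\colon Z\to X$ with $Z\in\Cal Z$. Since $Z\in\Cal Z\subseteq\Cal T$ and $FX\in\Cal F$, the morphism $\eta_X a$ is trivial, so the $\Cal Z$-kernel property of $\varepsilon_X$ yields a lift $a=\varepsilon_X a'$. Then $\varepsilon_X h=\varepsilon_X a'b$, and cancelling the monomorphism $\varepsilon_X$ in $\Cal C$ gives $h=a'b$, which is $\Cal Z$-trivial. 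Hence $\Sigma\varepsilon_X$ is a monomorphism in $\Cal C/\Cal Z$, completing the proof that it is the kernel of $\Sigma\eta_X$; the cokernel half is entirely dual.

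For (2), $\Sigma$ is additive as the quotient by an additive ideal, the inclusions $\Sigma(\Cal T)\subseteq\Sigma(\Cal T)$ and $\Sigma(\Cal F)\subseteq\Sigma(\Cal F)$ are tautological, and the short-exactness axiom of Definition~\ref{tt-functor} has just been established. Finally, for the universal property, any $G\colon\Cal C\to\Cal D$ satisfying (1) vanishes on $\Triv$, and the additivity coming with (2) then lets the universal property of the quotient by an additive ideal produce a unique additive $H\colon\Cal C/\Cal Z\to\Cal D$ with $H\Sigma=G$. The main obstacle is the monomorphism step in (3): one must carefully combine the $\Cal Z$-kernel universal property (to lift $a$ through $\varepsilon_X$) with the fact that $\varepsilon_X$ is itself a monomorphism in $\Cal C$ in order to pass from ``$\varepsilon_X h$ is $\Cal Z$-trivial'' to ``$h$ is $\Cal Z$-trivial''.
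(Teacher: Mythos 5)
Your proof is correct and follows essentially the same route as the paper's: show the image of the canonical short $\Cal Z$-exact sequence is short exact in $\Cal C/\Cal Z$ by combining the $\Cal Z$-kernel universal property (to lift through $\varepsilon_X$) with the fact that $\varepsilon_X$ is monic in $\Cal C$, then deduce (2) and (3) together, and obtain the universal property from the standard quotient-by-an-ideal construction. The paper packages the monomorphism step as a uniqueness-of-lift argument (showing $\Sigma f=\Sigma g$ when $\Sigma\varepsilon\Sigma f=\Sigma\varepsilon\Sigma g$), which is logically identical to your ``$\Sigma\varepsilon_X$ is monic'' formulation.
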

    \begin{proof}
        Point $(1)$ is clear from the definition of the quotient category. We also already observed that the quotient functor is additive. Let
        $$
        \xymatrix{TA \ar[r]^\varepsilon & A \ar[r]^\eta & FA}
        $$
        be the short $\Cal Z$-exact sequence of an object $A \in \Cal C$. If we prove that
        $$
        \xymatrix{\Sigma TA \ar[r]^-{\Sigma \varepsilon} & \Sigma A \ar[r]^-{\Sigma \eta} & \Sigma FA}
        $$
        is a short exact sequence in $\Cal C / \Cal Z$, we get at once that $(\Sigma(\Cal T), \Sigma(\Cal F))$ is a torsion theory in $\Cal C/ \Cal Z$ and that $\Sigma$ is a torsion theory functor. Let us show that $\Sigma \varepsilon$ is the kernel of $\Sigma \eta$.

        Let $h \colon X \to A$ be a morphism in $\Cal C$ such that $\Sigma \eta \Sigma h =0$, that is, $\eta h$ is $\Cal Z$-trivial. Thus, there is a unique $f \colon X \to TA$ in $\Cal C$ such that $h=\varepsilon f$ and therefore $\Sigma h= \Sigma \varepsilon \Sigma f$. Let $g\colon X \to TA$ be another morphism such that $\Sigma h=\Sigma \varepsilon \Sigma g$. This means that $\varepsilon(f-g)$ is a trivial morphism, hence it factors through a trivial object $Z \in \Cal Z$. Moreover, since $\varepsilon$ is the $\Cal Z$-kernel of $\eta$, we also have a (unique) morphism $u \colon Z \to A$ making the right-hand triangle in the diagram
        $$
        \xymatrix{
        X \ar[r]^-{f-g} \ar[rd] & TA \ar[r]^\varepsilon & A \\
        & Z \ar[ru] \ar[u]^u&
        }
        $$
        commute. Since $\varepsilon$ is a monomorphism, also the left-hand triangle commutes. Thus, $f-g$ is a trivial morphism, hence $\Sigma f=\Sigma g$. By a dual argument we get that $\Sigma \eta$ is the cokernel of $\Sigma \varepsilon$, hence (2) and (3) are proven.

        Now, let $G \colon \Cal C \to \Cal D$ be another functor satisfying conditions (1), (2) and (3). Then, it is routine to check that  from the assignments $HX:=GX$ and $H \Sigma f:= Gf$ we get a (well-defined) functor $H \colon \Cal C /\Cal Z \to \Cal D$ such that $G=H \Sigma$. The uniqueness of such a functor is clear.
    \end{proof}

\section{Examples}\label{Section_examples}

\subsection{Some pretorsion theories in categories of modules}
Let $R$ be a unital ring. For simplicity, we assume $R$ to be commutative, but similar considerations can be done in the non-commutative setting. Let $\Mod (R)$ denote the category of unital $R$-modules. Given a multiplicatively closed subset $S$ of $R$ (namely a subset $S$ of $R$ such that $1\in S$ and for any $r,s \in S$ one has $rs \in S$), it is possible to define a torsion theory $(\Cal T_S, \Cal F_S)$ where the torsion class consists of those modules $M\in \Mod(R)$ such that $M \otimes_R S^{-1}R=0$ (see \cite[Chapter~VI]{S}; notice that in \cite{S} the term ``pretorsion" is used in a different context). Explicitly, $M\in \Cal T_S$ if, for every $m \in M$, there exists $s \in S$ such that $sm=0$, while $M\in \Cal F_S$ if there are no non-zero elements of $M$ annihilated by elements of $S$.
In view of Proposition~\ref{main}, any inclusion $S\subseteq T$ of multiplicatively closed subsets of $R$ induces a pretorsion theory $(\Cal T_T, \Cal F_S)$ where the class $\Cal Z$ of trivial objects consists of those modules $M$ with the following property: for every $m \in M$, there exists $t \in T$ such that $tm=0$ and if $sm=0$ for some $s\in S$, then $m=0$. In terms of annihilator ideals, for every non-zero $m \in M$ we have $\operatorname{Ann}_R(m)\cap T \neq \emptyset$ and $\operatorname{Ann}_R(m)\cap S=\emptyset$. As a particular case of what we have just seen, any inclusion of prime ideals induces a pretorsion theory, since the complement of a prime ideal is a multiplicatively closed set.

\medskip

The following remark, even if not surprising, has never been pointed out.

\begin{remark} A subcategory $\Cal T$ of a given category $\Cal C$ can be the torsion class of (possibly infinitely) many different pretorsion theories. A way to see it is to take a domain $R$ of infinite Krull dimension. Then, we can consider an infinite chain of prime ideals $0=P_0 \subsetneq P_1 \subsetneq P_2 \subsetneq \dots$ which induces pretorsion theories $(\Cal T_0, \Cal F_i)$  for $i \geq 0$,  where $\Cal T_0$ is the subcategory of ``classical" torsion modules (namely, those modules whose elements are annihilated by a non-zero element of $R$) and $\Cal F_i$ is the full subcategory consisting of those modules $N$ such that for every $n \in N$, $\operatorname{Ann}_R(n)\subseteq P_i$. It is easy to see that $\Cal F_i\neq \Cal F_j$ for $i\neq j$, since $R/P_i \in \Cal F_i \setminus \Cal F_{i-1}$ for every $i \geq 1$.
\end{remark}

\subsection{Pretorsion theories in $\mod (kA_n)$}
Lattices of torsion theories over finite dimensional algebras have been widely studied, see for example \cite{T}. These provide chains of torsion theories and hence can be used to give plenty of examples of pretorsion theories in module categories by applying Theorem~\ref{main}. Recall that any finite dimensional algebra over an algebraically closed field $k$ can be realised as the path algebra of some bound quiver, see for example \cite[Chapter II]{ASS} for details.
For the less familiar reader, given a finite quiver $Q$ (that is an oriented graph), its path algebra over $k$ is the $k$-algebra whose underlying vector space has as basis the set of all paths in $Q$ and the product of two paths is their concatenation whenever this is possible and zero otherwise.

Here, we fix an algebraically closed field $k$ and a positive integer $n$ and focus on a classic example: the path algebra $kA_n$ of the linearly oriented Dynkin quiver

\begin{align*}
A_n: \,\, \stackrel{1}{\bullet}\longrightarrow \stackrel{2}{\bullet}\longrightarrow \dots\longrightarrow \stackrel{n-1}{\bullet}\longrightarrow \stackrel{n}{\bullet.}
\end{align*}

Letting $\mod (kA_n)$ denote the category of finitely generated right $kA_n$-modules, its Auslander-Reiten quiver is shown in Figure \ref{figure_AR}. This is a very useful diagram that collects a lot of key information on $\mod (kA_n)$: its vertices correspond to  the indecomposable modules in the category (that is the building blocks of the objects) and its arrows to the irreducible morphisms in the category (that is the building blocks of the morphisms), see \cite[Chapter IV]{ASS} for details.

    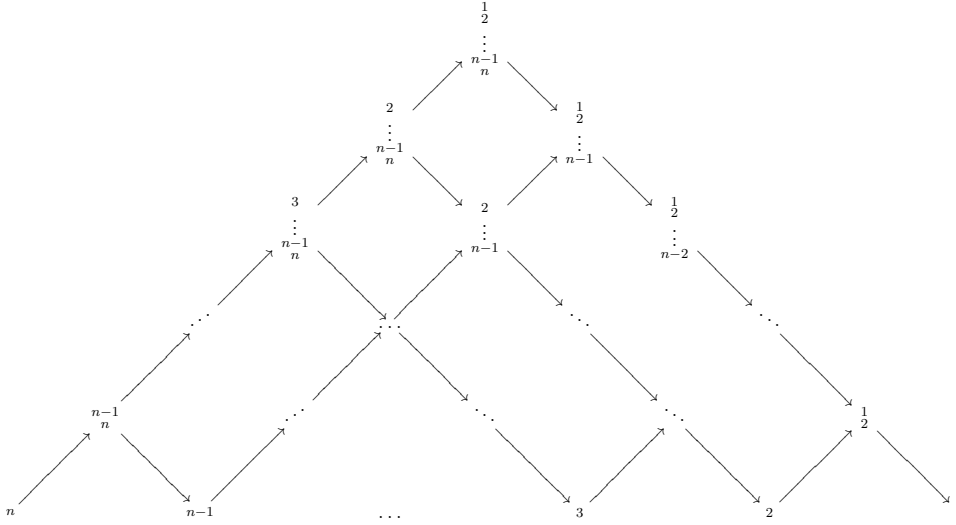
\begin{figure}[ht]
    \scalebox{0.65}{
    \xymatrix @!@C=0.5em@R=0.5em{
    &&&&& {\begin{smallmatrix}1\\2\\\vdots\\n-1\\n \end{smallmatrix}}\ar[rd]&&&\\
    &&&& {\begin{smallmatrix}2\\\vdots\\n-1\\n \end{smallmatrix}}\ar[ru]\ar[rd]&& {\begin{smallmatrix}1\\2\\\vdots\\n-1 \end{smallmatrix}}\ar[rd]&&&\\
    &&&{\begin{smallmatrix}3\\\vdots\\n-1\\n\end{smallmatrix}}\ar[ru]\ar[rd]&&{\begin{smallmatrix}2\\\vdots\\n-1\end{smallmatrix}}\ar[ru]\ar[rd]&&{\begin{smallmatrix}1\\2\\\vdots\\n-2 \end{smallmatrix}}\ar[rd]&&&&\\
    && \reflectbox{$\ddots$} \ar[ru]&&\dots\ar[ru]\ar[rd]&&\ddots\ar[rd]&&\ddots\ar[rd]&&\\
    &{\begin{smallmatrix}n-1\\n\end{smallmatrix}}\ar[ru]\ar[rd]&&\reflectbox{$\ddots$}\ar[ru]&&\ddots\ar[rd]&&\ddots\ar[rd]&&{\begin{smallmatrix}1\\2\end{smallmatrix}}\ar[rd]&&\\
    {\begin{smallmatrix}n\end{smallmatrix}}\ar[ru]&&{\begin{smallmatrix}n-1\end{smallmatrix}}\ar[ru]&&\dots&&{\begin{smallmatrix}3\end{smallmatrix}}\ar[ru]&&{\begin{smallmatrix}2\end{smallmatrix}}\ar[ru]&& {\begin{smallmatrix}1\end{smallmatrix}}
    }}
    \caption{The Auslander-Reiten quiver of $\mod(k A_n)$.}
    \label{figure_AR}
    \end{figure}

\begin{example}
We  give some examples of pretorsion theories in $\mod kA_n$ for any positive integer $n$. In Figure \ref{figure_AR}, the leftmost ascending diagonal consists of all the indecomposable projectives, the descending rightmost diagonal consists of all the indecomposable injectives and all the modules on the bottom row are simple. Moreover, all the ascending arrows are monomorphisms and all the descending ones are epimorphisms, so it is fairly easy to read short exact sequences from the diagram.
    \begin{enumerate}
        \item Consider the chain of torsion classes
        \begin{align*}
            0\subset \mathcal{T}_1\subset \mathcal{T}_2\subset \dots \subset \mathcal{T}_{n-1}\subset\mathcal{T}_n,
        \end{align*}
        where $\mathcal{T}_i:=\text{add}\Bigg\{\begin{smallmatrix} 1 \end{smallmatrix}, \begin{smallmatrix} 1\\2 \end{smallmatrix},\dots, \begin{smallmatrix} 1\\ \vdots \\ i \end{smallmatrix}\Bigg\}$.
        The torsion-free class $\mathcal{F}_i$ corresponding to $\mathcal{T}_i$ is add of the indecomposables in $\mod kA_n$ but not in $\mathcal{T}_i$.
        Then, by Theorem~\ref{main}, we have that for any $i> j$, the pair $(\mathcal{T}_i, \mathcal{F}_j)$ is a pretorsion theory in $\mod (kA_n)$  with class of trivial objects 
        \begin{align*}
        \mathcal{Z}_{i,j}:= \mathcal{T}_i\cap \mathcal{F}_j= \text{add}\Bigg\{ \begin{smallmatrix} 1\\ \vdots \\ j+1 \end{smallmatrix},\begin{smallmatrix} 1\\ \vdots \\ j+2 \end{smallmatrix},\dots, \begin{smallmatrix} 1\\ \vdots \\ i \end{smallmatrix}\Bigg\}.
        \end{align*}
        \item Consider now the chain of torsion classes
        \begin{align*}
            0\subset \mathcal{T}_1\subset \mathcal{T}_2\subset \dots \subset \mathcal{T}_{n-2}\subset\mathcal{T}_{n-1},
        \end{align*}
        with
        \begin{align*}
        \mathcal{T}_i:=\text{add}\Bigg\{ \text{quot}\Bigg\{ \begin{smallmatrix} n\end{smallmatrix},\, \begin{smallmatrix} n-1\\ n\end{smallmatrix}, \dots, \begin{smallmatrix} n-i+1\\ \vdots \\n\end{smallmatrix}\Bigg\}\Bigg\},
        \end{align*}
        where by quot$\{\dots\}$ we mean the quotient closure of the given set. In other words, $\mathcal{T}_1=\text{add}\{\begin{smallmatrix} n\end{smallmatrix}\}$, while for bigger $i$, $\mathcal{T}_i$ is add of the indecomposables lying in the triangular area of the Auslander-Reiten quiver in Figure \ref{figure_AR} delimited by the vertices $\begin{smallmatrix} n\end{smallmatrix},\, \begin{smallmatrix} n-i+1\end{smallmatrix}$ and $\begin{smallmatrix} n-i+1\\ \vdots \\ n\end{smallmatrix}$.
        The torsion-free class $\mathcal{F}_i$ corresponding to $\mathcal{T}_i$ is then
         \begin{align*}
        \mathcal{F}_i:=\text{add}\Bigg\{ \text{submod}\Bigg\{ \begin{smallmatrix} 1\end{smallmatrix},\, \begin{smallmatrix} 1\\ 2\end{smallmatrix}, \dots, \begin{smallmatrix} 1\\\vdots\\n-i\end{smallmatrix}\Bigg\}\Bigg\},
        \end{align*}
         where by submod$\{\dots\}$ we mean the submodule closure of the given set. In other words, $\mathcal{F}_{n-1}=\text{add}\{\begin{smallmatrix} 1\end{smallmatrix}\}$
         while for smaller $i$, $\mathcal{F}_i$ is add of the indecomposables lying in the triangular area of the Auslander-Reiten quiver in Figure~\ref{figure_AR} delimited by the vertices $\begin{smallmatrix} n-i\end{smallmatrix},\, \begin{smallmatrix} 1\end{smallmatrix}$ and $\begin{smallmatrix} 1\\ \vdots \\ n-i\end{smallmatrix}$. Then, by Theorem~\ref{main}, we have that for any $i<j$, the pair $(\mathcal{T}_i, \mathcal{F}_j)$ is a pretorsion theory in $\mod (kA_n)$  with class of trivial objects 
        \begin{align*}
        \mathcal{Z}_{i,j}:= \mathcal{T}_i\cap \mathcal{F}_j= \text{add}\Bigg\{\text{quot}\Bigg\{ \begin{smallmatrix} n-j \end{smallmatrix},\begin{smallmatrix} n-j-1\\n-j \end{smallmatrix},\dots, \begin{smallmatrix} n-i+1\\ \vdots \\ n-j \end{smallmatrix}\Bigg\}\Bigg\}.
        \end{align*}
        \end{enumerate}
\end{example}

\begin{remark}\label{Remark_A2}
    Theorem~\ref{main} gives a way to construct many pretorsion theories but not all pretorsion theories can be obtained in this way. In particular, Theorem~\ref{main} always produces a pretorsion theory where the first half is a torsion class and the second a torsionfree class in the classical sense.
    Even in the case of abelian categories, pretorsion theories do not have such restrictive properties, and one can use Theorem~\ref{extension_Serre_2} to produce more examples.
    Consider, for instance, the path algebra $kA_2$. Then, $\mod (kA_2)$ contains exactly three indecomposable modules and it has Auslander-Reiten quiver

    \begin{align*}
    \xymatrix @C=1em@R=1em{
    & {\begin{smallmatrix}1\\2\end{smallmatrix}}\ar[rd]^{\beta}&\\
    {\begin{smallmatrix}2\end{smallmatrix}}\ar[ru]^{\alpha}&& {\begin{smallmatrix}1\end{smallmatrix},}
    }
    \end{align*}
    where ${\begin{smallmatrix}2\end{smallmatrix}}$ is a simple projective, ${\begin{smallmatrix}1\\2\end{smallmatrix}}$ a projective-injective and ${\begin{smallmatrix}1\end{smallmatrix}}$ a simple injective.
    Consider the torsion pair $(\mathcal{U}=\text{add}\,\{\begin{smallmatrix}1\end{smallmatrix},\begin{smallmatrix}1\\2\end{smallmatrix}\},\mathcal{V}=\text{add}\,\{\begin{smallmatrix}2\end{smallmatrix}\})$ and the Serre subcategory $\mathcal{S}=\text{add}\,\{\begin{smallmatrix}1\end{smallmatrix}\}$ in $\mod(kA_2)$. Applying Theorem~\ref{extension_Serre_2}, we get the pretorsion theory 
    \begin{align*}
        (\mathcal{U}\ast \mathcal{S}=\text{add}\,\{\begin{smallmatrix}1\end{smallmatrix},\begin{smallmatrix}1\\2\end{smallmatrix}\},\mathcal{S}\ast\mathcal{V}=\text{add}\,\{\begin{smallmatrix}2\end{smallmatrix}, \begin{smallmatrix}1\end{smallmatrix}\}),
    \end{align*}
    with class of trivial objects $\mathcal{S}$. It is easy to verify that the short $\mathcal{S}$-exact sequences of the three indecomposable modules are
    \begin{align*}
    \xymatrix @C=1.5em@R=1em{
    \begin{smallmatrix}0\end{smallmatrix}\ar[r]&\begin{smallmatrix}2\end{smallmatrix}\ar@{=}[r]& \begin{smallmatrix}2\end{smallmatrix},
    &{\begin{smallmatrix}1\\2\end{smallmatrix}}\ar@{=}[r]&{\begin{smallmatrix}1\\2\end{smallmatrix}}\ar[r]^{\beta} &\begin{smallmatrix}1\end{smallmatrix},
    &\begin{smallmatrix}1\end{smallmatrix}\ar@{=}[r]&\begin{smallmatrix}1\end{smallmatrix}\ar@{=}[r]&\begin{smallmatrix}1\end{smallmatrix}.
    }
    \end{align*}
    
    Note that the above pretorsion theory cannot be obtained by applying Theorem~\ref{main}. In fact, the short exact sequence
    \begin{align*}
    \xymatrix @C=1.5em@R=1em{
        0\ar[r]& \begin{smallmatrix}2\end{smallmatrix}
        \ar[r]^-{\alpha}
        &{\begin{smallmatrix}1\\2\end{smallmatrix}}
        \ar[r]^-{\beta} &\begin{smallmatrix}1\end{smallmatrix}\rightarrow 0}
    \end{align*}
    is such that the end-terms are in 
    $\mathcal{S}\ast\mathcal{V}$, while the middle term is not. Hence $\mathcal{S}\ast\mathcal{V}$ is not closed under extensions and so it is not a torsionfree class. Note that, however, $\mathcal{S}\ast\mathcal{V}$ is still closed under $\mathcal{S}$-extensions.
\end{remark}

\subsection{Pretorsion theories and recollements}\label{recollements}
Let $\Cal C$ be an abelian category. Given a Serre subcategory $\Cal S$, it is possible to construct an abelian quotient category $j^*\colon \Cal C \to [\Cal C / \Cal S]$ (see \cite{Gabriel}), where $j^*$ is an essentially surjective exact functor whose kernel is $\Cal S$. Note that this quotient category construction is different from the one from Section~\ref{stable_category}.
The category $\Cal S$ is called bilocalising if $j^*$ has both a left adjoint $j_!$ and a right adjoint $j_*$. In this case, $\Cal S$ turns out to be both a monocoreflective and an epireflective subcategory of $\Cal C$ and thus induces a recollement of $\Cal C$ \cite[Remark~2.8]{Jorge}:
$$
\xymatrix@!=50pt{
\Cal S \ar[r]^{i_*} & \Cal C \ar[r]^{j^*} \ar@/_15pt/[l]_{i^*} \ar@/^15pt/[l]_{i^!} & [\Cal C /\Cal S] \ar@/_15pt/[l]_{j_!} \ar@/^15pt/[l]_{j_*}
}
$$

Notice that, up to equivalence, any recollement of abelian categories arises in this way (see \cite[Theorem~4.1]{Jorge} for a precise statement of this fact). A torsion theory $(\Cal U, \Cal V)$ in an abelian category can be ``extended" (in the sense of Theorem~\ref{extension_Serre_2}) by any bilocalising subcategory $\Cal S$ to a pretorsion theory $(\Cal U \ast \Cal S, \Cal S \ast \Cal V)$.  Notice that if we apply the exact functor $j^*$ to diagram~(\ref{diagram}) in the proof of Theorem~\ref{extension_Serre_2}, we have that if
    $$
    \xymatrix{T_X \ar[r] & X  \ar[r] & F_X}
    $$
    is an $\Cal S$-short exact sequence of $X \in \Cal C$, with $T_X \in \Cal U \ast \Cal S $ and $F_X \in \Cal S \ast \Cal V$, then
    $$
    \xymatrix{0 \ar[r] & j^*(T_X) \ar[r] & j^*(X)  \ar[r] & j^*(F_X) \ar[r] & 0}
    $$
    is a short exact sequence in $[\Cal C/ \Cal S] $. Nevertheless, neither the image of $(\Cal U \ast \Cal S, \Cal S \ast \Cal V)$ in the quotient $[\Cal C / \Cal S]$ nor that of $(\Cal U, \Cal V)$  are torsion theories in general. To see this, consider the torsion theory $(\Cal U, \Cal V)=(\text{add}\,\{\begin{smallmatrix}1\end{smallmatrix}\},\text{add}\,\{\begin{smallmatrix}2\end{smallmatrix}, \begin{smallmatrix}1\\2\end{smallmatrix}\})$ in $\mod (kA_2)$ (see Remark \ref{Remark_A2}) and take the Serre subcategory $\Cal S=\text{add}\,\{\begin{smallmatrix}2\end{smallmatrix}\}$. Then, it suffices to observe that $\begin{smallmatrix}1\end{smallmatrix}\cong \begin{smallmatrix}1\\2\end{smallmatrix}$ in $[\mod (kA_2)/\Cal S]$.
    
    On the other hand, using the construction from Theorem~\ref{stable_theorem}, the functor $\Sigma$ sends the pretorsion theory $(\Cal U\ast \Cal S=\text{add}\,\{\begin{smallmatrix}1\end{smallmatrix}, \begin{smallmatrix}2\end{smallmatrix}\},\Cal S\ast \Cal V=\text{add}\,\{\begin{smallmatrix}2\end{smallmatrix}, \begin{smallmatrix}1\\2\end{smallmatrix}\})$ in $\mod (kA_2)$ to the torsion theory $(\text{add}\,\{\begin{smallmatrix}1\end{smallmatrix}\},\text{add}\,\{\begin{smallmatrix}1\\2\end{smallmatrix}\})$ in $\mod (kA_2)/\Cal S$. Notice that here $\mod (kA_2)/\Cal S$ is an additive category that is not abelian.

\subsection{Non-epireflective Serre subcategories.}
The following example shows that the assumptions on Proposition~\ref{extension_Serre_1} do not guarantee that $(\mathcal{U}\ast \mathcal{S}, \mathcal{S}\ast \mathcal{V})$ is a pretorsion theory even in the abelian case.
\begin{example}\label{injective-reduced} 
Consider the abelian category $\Mod (\mathbb{Z})$ with torsion theory $(\mathcal{U},\mathcal{V})$, where $\mathcal{U}$ is the class of injective abelian groups and $\mathcal{V}$ the class of reduced abelian groups \cite[Example~1.13.6]{Borceux}. Take $\mathcal{S}$ to be the class of torsion abelian groups and note that this is a Serre subcategory that is not an epireflective subcategory of $\Mod (\mathbb{Z})$.
We show that $(\mathcal{U}\ast \mathcal{S}, \mathcal{S}\ast \mathcal{V})$ is not a pretorsion theory by showing that $\mathcal{S}\ast \mathcal{V}$ is not closed under products.
For example, take the injective abelian group $\mathbb{Q}/\mathbb{Z}$. This is clearly a torsion abelian group, hence it belongs to $\mathcal{S}\ast \mathcal{F}$. 
Now, consider the product $\prod_{i\in\mathbb{N}} \mathbb{Q}/\mathbb{Z} $ of infinitely many copies of $\mathbb{Q}/\mathbb{Z}$, which is injective but not torsion. Suppose for a contradiction that $\prod_{i\in\mathbb{N}} \mathbb{Q}/\mathbb{Z}$ is in $\mathcal{S}\ast \mathcal{F}$, that is, there is a short exact sequence of the form
\begin{align*}
    0\rightarrow S \rightarrow \prod_{i\in\mathbb{N}} \mathbb{Q}/\mathbb{Z} \rightarrow F \rightarrow 0
\end{align*}
with $S\in\mathcal{S}$ and $F\in \mathcal{F}$. Since $F$ is a quotient of an injective $\mathbb{Z}$-module, then it is injective. Thus $F=0$ and $S\cong\prod_{i\in\mathbb{N}} \mathbb{Q}/\mathbb{Z}$, contradicting the latter not being torsion.
\end{example}    

\subsection{Internal groupoids in a homological category} Let $\Cal C$ be a homological category (that is, a finitely complete, regular and protomodular category with a zero object \cite{BB}) and consider the category $\text{Grpd}(\Cal C)$ of internal groupoids in $\Cal C$. In \cite{BG}, two torsion theories in $\text{Grpd}(\Cal C)$ are presented, namely $(\text{Ab}(\Cal C), \text{Eq}(\Cal C))$ and $(\text{ConnGrpd}(\Cal C), \Cal C)$, where $\text{Ab}(\Cal C), \text{Eq}(\Cal C)$ and $\text{ConnGrpd}(\Cal C)$ denote the subcategories of (internal) abelian objects, equivalent relations and connected groupoids respectively, while any object $X$ of $\Cal C$ can be seen as an internal ``trivial" groupoid taking the identities $\quad\ \xymatrix{X \ar@(ld,lu) \ar@<-.7ex>[r] \ar@<.7ex>[r] & X \ar[l] }$. Since every internal abelian object is a connected groupoid, then
$(\text{ConnGrpd}(\Cal C), \text{Eq}(\Cal C))$  is a pretorsion theory in $\text{Grpd}(\Cal C)$ by Theorem~\ref{main}.

\subsection{Chains of torsion theories for complexes}
Let $\Cal C$ be a pointed regular category where every regular epimorphism is a normal epimorphism. Consider the category $ch(\Cal C)$ of chain complexes in $\Cal C$. We shall denote a generic object of $ch(\Cal C)$ by
$$\xymatrix{
X_{\bullet}:  &\dots \ar[r] & X_{n+1} \ar[r]^{\delta_{n+1}} & X_n \ar[r]^{\delta_{n}}&  X_{n-1} \ar[r]^{\delta_{n-1}} & \dots & n \in \mathbb{Z}
}$$

In \cite{Cafaggi}, several chains of torsion theories in $ch(\Cal C)$ have been studied. Here we only present one example, in order to apply Theorem~\ref{main} explicitly. For every $n \in \mathbb{Z}$, there is a torsion theory $(\Cal T_n, \Cal F_n)$, where $\Cal T_n$ consists of those chains $X_{\bullet}$ with $X_k=0$ for all $k \leq n$, while $\Cal F_n$ consists of those chains $X_{\bullet}$ with $X_k=0$ for all $k > n$ and $\delta_n$ is a monomorphism. Thus, for every $n \geq m$ we get a pretorsion theory $(\Cal T_m, \Cal F_n)$ where the class $\Cal Z_{m,n}$ of trivial objects consists of those chains $X_\bullet$ with $X_k=0$ for all $k\leq m$ and $k > n$, and $\delta_n$ is a monomorphism.

\subsection{Pretorsion theories and stability functions}
Let $\Cal C$ be an abelian length category, that is, an essentially small abelian category such that every object has a finite composition series. A stability function $\Phi$ is a map from the non-zero objects of $\Cal C$ into a totally ordered set $(P, \leq)$ satisfying the following properties (see \cite{BST} and the references therein):\\
(1) if $A \cong B$ for some non-zero objects $A,B \in \Cal C$, then $\Phi(A)=\Phi(B)$;\\
(2) if $0\to A\to B\to C \to 0$ is a short exact sequence of non-zero objects of $\Cal C$, then exactly one of the following three cases can occur:
$$
\bullet \ \Phi(A)< \Phi(B)< \Phi(C); \qquad \bullet \ \Phi(A)> \Phi(B)> \Phi(C); \qquad \bullet \ \Phi(A)= \Phi(B)=\Phi(C).
$$

For every $p \in P$, there is a torsion theory $(\Cal T_{\geq p}, \Cal F_{<p})$ \cite[Proposition~2.19]{BST}, where:\\
\indent $\Cal T_{\geq p}:=\{ X \in \Cal C \mid \Phi(Y)\geq p \text{ for every quotient } Y \text{ of } X\}\cup \{0\}$ and \\
\indent $\Cal F_{< p}:=\{ X \in \Cal C \mid \Phi(H)< p \text{ for every subobject } H \text{ of } X\}\cup \{0\}$\\
(the obvious variation with $(\Cal T_{>p}, \Cal F_{\leq p})$ holds as well) \cite[Section~2]{BST}. Then, for every $p,q \in P$ with $p \leq q$, there is a pretorsion theory $(\Cal T_{\geq p}, \Cal F_{< q})$.

\section*{Acknowledgement}
The authors would like to thank Marino Gran and Jorge Nuno Dos Santos Vitória for their useful comments and suggestions.

\end{document}